\newtheorem{theorem}{Theorem}[section]
\newtheorem{proposition}[theorem]{Proposition}
\newtheorem{lemma}[theorem]{Lemma}
\newtheorem{remark}[theorem]{Remark}
\newcommand{\p}{\partial}
\newcommand{\C}{{\mathbb C}}
\title[Non-linear bi-algebraic curves and surfaces in \(\mathcal H(12)\) and \(\mathcal H(18)\)]
{Non-linear bi-algebraic curves and surfaces in moduli spaces of Abelian differentials}
\author{Bertrand Deroin \& Carlos Matheus}
\address{\textbf{Bertrand Deroin}
\newline
CNRS \& Universit\'e de Cergy-Pontoise (UMR CNRS 8088), 95302, Cergy-Pontoise, France.} 
\email{bertrand.deroin@cyu.fr}
\address{\textbf{Carlos Matheus}
\newline
CNRS \& \'Ecole Polytechnique (UMR CNRS 7640), 91128, Palaiseau, France.} 
\email{carlos.matheus@math.cnrs.fr}
\date{\today}
\begin{document}
\maketitle

\setcounter{page}{1}

\begin{quote}
{\normalfont\fontsize{8}{10}\selectfont {\bfseries Abstract.} The strata of the moduli spaces of Abelian differentials are non-homogenous spaces carrying natural bi-algebraic structures. Partly inspired by the case of homogenous spaces carrying bi-algebraic structures (such as torii, Abelian varieties and Shimura varieties), Klingler and Lerer recently showed that any bi-algebraic curve in a stratum of the moduli space of Abelian differentials is linear provided that the so-called condition $(\star)$ is fulfilled. 

In this note, we construct a non-linear bi-algebraic curve, resp. surface, of Abelian differentials of genus $7$, resp. $10$. \par}
\end{quote}

\section{Introduction} 

The study of transcendence properties and unlikely intersections in Diophantine geometry is a fascinating topic possessing a vast literature developing in many directions including those related to the interplay between Hodge theory and the geometry of homogenous spaces such as Abelian varieties and their moduli spaces (that is, Shimura varieties). In this context, an impressive number of heuristic principles and rigorous results\footnote{Such as Ax--Schanuel conjectures, Ax--Lindemann type theorems, Andr\'e--Oort and Zilber--Pink conjectures.} was discovered by many authors, and, as a way to unify these statements and also suggest new ones, the point of view of \emph{bi-algebraic structures} became increasingly popular: see, for instance, the survey \cite{KUY} and the articles \cite{BKT} and \cite{BKU}. 

In their recent work \cite{KL}, Klingler and Lerer proposed\footnote{This is a natural goal because the moduli spaces of Abelian differentials seem to ``behave'' like homogenous spaces: for example, the celebrated breakthroughs by Eskin, Mirzakhani and Mohammadi \cite{EM}, \cite{EMM} show that this is the case from the point of view of Dynamical Systems.} to extend the bi-algebraic point of view to the non-homogenous setting of moduli spaces of Abelian differentials. More concretely, the moduli space of Abelian differentials of genus $g$ is stratified into complex quasi-projective algebraic orbifolds $H(\kappa)$ parametrising non-trivial Abelian differentials whose zeroes have multiplicities prescribed by a list $\kappa=(k_1,\dots,k_{\sigma})$ such that $k_1+\dots+k_{\sigma}=2g-2$. As it was proved by Veech and Masur, the relative periods of the elements of $H(\kappa)$ can be used to define the so-called \emph{period charts} inducing a linear integral structure on the analytification of $H(\kappa)$. In particular, after projectivising the stratum $H(\kappa)$, $\kappa=(k_1,\dots,k_{\sigma})$, we obtain a quasi-projective orbifold $\mathcal{H}(\kappa)$ of dimension $2g-2+\sigma$ whose analytification possesses a linear projective structure. In this setting, a closed, irreducible, algebraic subvariety $W$ of $\mathcal{H}(\kappa)$ is \emph{bi-algebraic} if its analytification $W^{\textrm{an}}$ is algebraic in period charts\footnote{That is, the relative periods of Abelian differentials projectively lying in $W^{\textrm{an}}$ satisfy exactly $\textrm{codim}_{\mathcal{H}(\kappa)}(W)$ independent algebraic relations (over $\mathbb{C}$).}, cf. \cite[Def. 1.1]{KL}, and Klingler and Lerer proved that all bi-algebraic curves in the strata $\mathcal{H}(2)$ and $\mathcal{H}(1,1)$ of Abelian differentials of genus two are (projectively) linear (in period charts), and, in general, any bi-algebraic curve in $\mathcal{H}(\kappa)$ is linear \emph{provided} their condition $(\star)$ is satisfied (cf. \cite[Thm. 2.8 \& 2.10]{KL}). Furthermore, they asked whether the bi-algebraicity of a subvariety of $\mathcal{H}(\kappa)$ is already enough to automatically ensure its linearity without extra conditions (cf. \cite[Conjecture 2.7]{KL}). The main results of this paper say that some bi-algebraic subvarieties can be non-linear: 

\begin{theorem}\label{t: non linear bi algebraic curve} The projectivisation of the family \(\{ (C_u, \omega_{u} ) \} _{u\in \C \setminus\{0,\pm 1\}} \) of Abelian differentials defined by 
\begin{equation}\label{eq: curve family} C_u := \overline{\{ y^6 = x (x-1) (x+1) (x-u)\}} \text{ and } \omega_{u} :=  x^2dx/y^5, \end{equation}  
is a bi-algebraic curve in \(\mathcal H(12) \) which is not linear. 
\end{theorem}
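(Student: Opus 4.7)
The plan has three parts: identify the stratum, establish bi-algebraicity via a constant CM sub-Hodge structure, and rule out linearity through an irreducibility argument for the associated hypergeometric system.

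\textbf{Stratum identification.} Applying Riemann--Hurwitz to the degree-$6$ cyclic cover $(x,y) \mapsto x$, branched with index $6$ above $\{0, \pm 1, u\}$ and with $\gcd(6, 4) = 2$ points of index $3$ above $\infty$, one computes $g(C_u) = 7$. At the unique point $\bar{0}$ of $C_u$ above $x = 0$, $y$ is a uniformizer with $x = y^6 \cdot (1 + O(y^6))$, so $\omega_u = 6\, y^{12}\, dy + O(y^{18})$ and $\operatorname{ord}_{\bar{0}}(\omega_u) = 12 = 2g - 2$. At the other branch points $\omega_u$ is non-vanishing, as is $\omega_u$ at each of the two points above $\infty$ (using the uniformizer $t$ defined by $x = t^{-3}$). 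Hence $(C_u, \omega_u) \in H(12)$.

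\textbf{Bi-algebraicity via a rigid CM sub-VHS.} Let $\mu \colon (x,y) \mapsto (x, \zeta y)$ with $\zeta = e^{2\pi i/6}$. Then $\mu^* \omega_u = \zeta\, \omega_u$, so $\omega_u$ sits in the $\zeta$-eigenspace $H^{1,0}_1(C_u)$ of the $\mu$-action on $H^{1,0}(C_u)$. A holomorphy count of the forms $x^a\, dx/y^k$ gives $\dim H^{1,0}_j = 3, 2, 1, 1, 0$ for $j = 1, \dots, 5$, with $H^{1,0}_1 = \C\{dx/y^5,\, x\, dx/y^5,\, x^2\, dx/y^5\}$. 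Consequently the $\Z[\zeta_6]$-stable rational sub-Hodge structure $V := H^1_1 \oplus H^1_5$ has Hodge numbers $(3, 3)$ with $F^1 V = H^{1,0}_1$, and the $\Z[\zeta_6]$-CM action on $V$ has signature $(3, 0)$. The associated Shimura datum is a unitary ball of dimension $3 \cdot 0 = 0$, so $V$ is a constant sub-variation of Hodge structure on $\C \setminus \{0, \pm 1\}$ up to finite monodromy, corresponding to a fixed abelian threefold $A$ isogenous to a factor of $\operatorname{Jac}(C_u)$ for all $u$. By $\mu$-equivariance of the period pairing, $\int_\gamma \omega_u = 0$ for $\gamma$ outside the $\zeta^{-1}$-eigenspace of $H_1(C_u; \C)$, so the period map $u \mapsto (\int_\gamma \omega_u)_\gamma$ takes values in a fixed linear subspace $\mathbb{P}^2 \subset \mathbb{P}^{13}$ identified with $\mathbb{P}(H^{1,0}(A))$. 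After a finite étale base change trivializing the monodromy of $V$, the expression of $\omega_u$ in a flat basis of $H^{1,0}(A) \cong \C^3$ is algebraic in $u$, so the image in $\mathbb{P}^2$ is an algebraic curve, proving bi-algebraicity.

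\textbf{Non-linearity.} Write $\omega_u = \sum_{i=0}^{2} f_i(u)\, \beta_i$ in a flat basis of $F^1 V$. Linearity in projective period coordinates is equivalent to $\C$-linear dependence of $f_0, f_1, f_2$. Up to a constant change of basis, these are the Lauricella--Deligne--Mostow hypergeometric periods
\[
\int_{\gamma_i(u)} x^{7/6} (x-1)^{-5/6} (x+1)^{-5/6} (x-u)^{-5/6}\, dx
\]
indexed by a $\Z[\zeta_6]$-basis of $H_1(C_u; \Z)^{\zeta^{-1}}$, and they form a basis of solutions of the order-$3$ Picard--Fuchs ODE obtained by iterating the Gauss--Manin derivative identity $\p_u \omega_u = \tfrac{5}{6}\, \omega_u/(x-u)$ in $H^1_{dR}(C_u)$. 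The main obstacle is to verify that the monodromy representation of this ODE acts \emph{irreducibly} (hence non-scalar) on $F^1 V$: I would compute the Picard--Lefschetz monodromies around the singular fibers $u = 0, \pm 1$ as complex reflections on the $\Z[\zeta_6]$-lattice $V_\Z$, and check that the subgroup they generate in $U(H^{1,0}(A))$ admits no invariant $\C$-line. Standard monodromy arguments then yield the $\C$-linear independence of $f_0, f_1, f_2$, proving that the bi-algebraic curve is not linear in projective period coordinates.
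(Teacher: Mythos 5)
Your stratum computation and your bi-algebraicity argument are essentially the paper's own: the eigenspace $H^1_\zeta$ (with $\zeta=e^{2\pi i/6}$) is $3$-dimensional and purely of type $(1,0)$, so the Hodge form on it is definite and the monodromy lands in a compact unitary group; since $H^1_\zeta\oplus H^1_{\zeta^5}$ is defined over $\mathbb Q$ the monodromy is also integral, hence finite, and periods with finite monodromy are algebraic. Your Shimura-theoretic packaging (signature $(3,0)$, zero-dimensional ball) is just a restatement of this, and your dimension count $3,2,1,1,0$ and the reduction of linearity to the $\C$-linear dependence of the coefficients $f_0,f_1,f_2$ of $\omega_u$ in a flat frame are both correct.

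The gap is in the non-linearity step, which is the entire content of the theorem: you reduce it to the irreducibility of the monodromy of the rank-$3$ local system $H^1_\zeta$ over $\C\setminus\{0,\pm1\}$, and then write ``I would compute the Picard--Lefschetz monodromies\dots and check'' --- that computation is never performed, so nothing is actually proved. Two further remarks on this reduction. First, irreducibility is strictly stronger than what is needed: non-linearity only requires that the section $[\omega_u]$ not lie in a proper \emph{flat} subbundle, i.e.\ that $\omega_u,\nabla_u\omega_u,\nabla_u^2\omega_u$ be generically independent; the monodromy could in principle be reducible with $\omega_u$ still generating everything, so you have chosen to prove a harder statement whose truth is not evident and which you do not verify. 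Second, even writing down the third-order Picard--Fuchs operator from $\partial_u\omega_u=\tfrac56\,\omega_u/(x-u)$ requires reducing the resulting pole at $(u,0)$ by an exact eigenform; this is exactly the computation the paper carries out (subtracting $\tfrac{U(t_k)}{P'(t_k)}\,d\bigl(y/(x-t_k)\bigr)$, Proposition 4.1), leading to the explicit connection matrix in the basis $x^k\,dx/y^5$, $k=0,1,2$, after which non-linearity follows from a concrete $3\times3$ Wronskian-type check that $p_{2,u},\nabla_up_{2,u},\nabla_u^2p_{2,u}$ are generically independent. To complete your argument you would either have to carry out your Picard--Lefschetz/invariant-line analysis in full, or replace it by this direct computation.
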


\begin{remark} Note that \(C_u\) is a branched cover of \(\overline{\{z^2=x(x-1)(x+1)(x-u)\}}\). In particular, the family of Abelian differentials $\{(C_u, [dx/y^3])\}_{u\in\mathbb{C}\setminus\{0,\pm1\}}$ corresponds to an arithmetic Teichm\"uller curve (in the sense of \cite[\S 5]{GJ}). 
\end{remark}

\begin{theorem}\label{t: non linear bi algebraic surface} Given a generic algebraic curve \(C\subset \mathbb C^3\),  the projectivisation of the family \(\{ (C_{a,b,c}, \omega_{a,b,c} ) \} _{\substack{(a,b,c)\in C, \\ a,b,c\in \C \setminus\{0, 1\} \\ distinct}}\) of Abelian differentials defined by 
\begin{equation}\label{eq: surface family} C_{a,b,c} := \overline{\{ y^6 = x (x-1) (x-a) (x-b) (x-c)\}} \text{ and } \omega_{a,b,c} :=  dx/y^5, \end{equation}  
is a bi-algebraic curve in \(\mathcal H(18) \) which is not linear. 
\end{theorem}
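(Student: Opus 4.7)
The plan is to parallel the strategy used for Theorem~\ref{t: non linear bi algebraic curve}, adapted to the genus-$10$ setting. A first, routine verification: by Riemann--Hurwitz for the totally ramified $6$-fold cyclic cover $x : C_{a,b,c}\to\mathbb{P}^1$ branched over $\{0,1,a,b,c,\infty\}$ one has $g(C_{a,b,c}) = 10$, and a local computation at $\infty$ with $x = t^{-6}$, $y\sim t^{-5}$ gives $\omega_{a,b,c} = dx/y^5 \sim t^{18}\,dt$. Thus $\omega$ vanishes to order $18$ at the unique preimage of $\infty$ and is regular and non-vanishing at the other five branch points, so $(C_{a,b,c}, \omega_{a,b,c}) \in \mathcal H(18)$.

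Next I would exploit the cyclic automorphism $\tau : (x,y)\mapsto(x,\zeta_6 y)$: since $\tau^*\omega = \zeta_6\omega$, the form $\omega$ lies in the $\zeta_6$-eigenspace $V_1 = \lan x^i\,dx/y^5 : 0 \le i \le 3 \ran$ of $H^0(\Omega^1_{C_{a,b,c}})$, which is $4$-dimensional, while the companion eigenspace $V_5$ vanishes. Consequently the entire $\zeta_6$-eigenspace of $H^1(C_{a,b,c},\C)$ equals $V_1 \subset H^{1,0}$, so the Hodge-theoretic Hermitian form restricted to it is positive-definite of signature $(4,0)$; by Galois conjugation, its companion on the $\zeta_6^{-1}$-eigenspace has signature $(0,4)$---both definite. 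After restriction of scalars from $\Z[\zeta_6]$ to $\Z$, the monodromy of the associated rank-$4$ $\Z[\zeta_6]$-local system $W_1$ on $\C^3\setminus\Delta$ therefore lies in the compact group $U(4)\times U(4)$ while remaining arithmetic; as a discrete subgroup of a compact Lie group, it is finite.

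Finiteness of this monodromy is the crucial ingredient for bi-algebraicity. After a finite \'etale base change trivialising $W_1$, the period map $\phi$ of the family becomes an algebraic morphism with values in the four-dimensional linear locus $\Lambda \cong \C^4$ of the period chart of $H(18)$, cut out by the vanishing of the periods of $\omega$ against the non-$\zeta_6$ eigenspaces of $H_1$. The image of the full three-parameter family is hence an algebraic subvariety of $\Lambda$, and its projectivisation is contained in the linear $3$-dimensional projective subspace $\mathbb P(\Lambda) \cong \mathbb P^3$ of the projective period chart of $\mathcal H(18)$. Restricting to the curve $C \subset \C^3$ yields an algebraic---hence bi-algebraic---curve in $\mathcal H(18)$.

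The main obstacle is proving non-linearity. Since the image lies inside the linear $\mathbb P^3$ above, one must show it is not a projective line therein. This reduces to verifying that the algebraic morphism $\phi : \C^3\setminus\Delta \to \mathbb P^3$ is itself non-linear, i.e.\ the four projectivised periods of $\omega$ cannot simultaneously be linear polynomials in $(a,b,c)$ (on the finite cover). I would establish this via a Kodaira--Spencer computation at a generic base point, verifying non-vanishing of the second-order variation of the periods, or by examining the limits of $\phi$ along a component of the discriminant $\Delta$ and exhibiting non-coplanar limit configurations. For a generic algebraic curve $C \subset \C^3$, the image $\phi(C)$ is then non-linear, completing the proof.
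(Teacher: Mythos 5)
Your setup---the genus count, the order-$18$ zero at infinity, the identification of $H^1_\zeta$ with the $4$-dimensional space of holomorphic eigenforms $\lan x^i\,dx/y^5 : 0\le i\le 3\ran$, and the deduction that the monodromy is finite because it is simultaneously unitary (the Hodge form is definite on each of $H^1_\zeta$ and $H^1_{\zeta^5}$) and integral (their sum is defined over $\mathbb{Q}$)---is exactly the paper's route to bi-algebraicity (Lemma \ref{l: trivialization} and Proposition \ref{p: omega2 bi algebraic}). The gap is in the non-linearity step, which is the actual content of the theorem: you correctly isolate it as ``the main obstacle'' but then only list two candidate strategies (``a Kodaira--Spencer computation \dots verifying non-vanishing of the second-order variation'' or ``examining the limits of $\phi$ along \dots $\Delta$'') without carrying either out. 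Moreover, the reduction you state is not quite the right one: showing that the components of $\phi$ are not linear polynomials in $(a,b,c)$ would not by itself prevent $\phi(C)$ from being a line for a generic curve $C$ (a non-linear map can still collapse onto a plane or a line). What is needed---and what the paper proves---is that $\phi$ is a local diffeomorphism onto $\mathbb{P}(H^1_\zeta)\cong\mathbb{P}^3$ at a generic point, i.e.\ that the \emph{first-order} variations $\nabla_{\partial_a}\omega,\nabla_{\partial_b}\omega,\nabla_{\partial_c}\omega$ together with $\omega$ span $H^1_\zeta$; no second-order information is required in this three-parameter situation (second derivatives are what the one-parameter Theorem \ref{t: non linear bi algebraic curve} needs).

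The paper closes this gap by an explicit Picard--Fuchs computation: Proposition \ref{p: Gauss-Manin} gives a closed formula for $\nabla_{\partial_{t_k}}$ in the polynomial trivialization of $H^1_\zeta$, obtained by differentiating $U(x)\,dx/y^{d-1}$ in the parameter and correcting by the exact eigenform $\frac{U(t_k)}{P'(t_k)}\,df$ with $f=y/(x-t_k)$ to kill the pole at $(t_k,0)$. Writing $\omega$, $P'(a)\nabla_{\partial_a}\omega$, $P'(b)\nabla_{\partial_b}\omega$, $P'(c)\nabla_{\partial_c}\omega$ in the basis $x^k\,dx/y^5$, $k=0,\dots,3$, one gets a matrix whose determinant equals $-\frac{91}{216}(a-b)(a-c)(b-c)\neq 0$ for distinct $a,b,c$; this is the independence statement your argument is missing. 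Until you produce such a computation (or invoke McMullen's Theorem 6.1, which the paper notes already yields that the configuration-to-period map is a local diffeomorphism), your proposal does not establish non-linearity.
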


In particular, observe that if \(S\subset \mathbb C^3\) is a generic algebraic surface, then the projectivisation of the family \(\{ (C_{a,b,c}, \omega_{a,b,c} ) \} _{\substack{(a,b,c)\in S, \\ a,b,c\in \C \setminus\{0, 1\} \\ distinct}}\) is a bi-algebraic surface in \(\mathcal H (18)\) which is not linear. We do not have any example of a non linear bi-algebraic subvariety of dimension at least \(3\) in some stratum of abelian differentials. 

The proofs of these statements occupy the rest of the paper. More precisely, after a brief discussion of the variations of Hodge structures associated to the families of curves $C_u$ and $C_{a,b,c}$ in Section \ref{s.fixed-parts}, we establish Theorem \ref{t: non linear bi algebraic curve}, resp. \ref{t: non linear bi algebraic surface}, in Section \ref{s.Thm1}, resp. \ref{s.Thm2}. 

\section*{Acknowledgements}

This work began at the workshop \emph{Finiteness results for special subvarieties: Hodge theory, o-minimality, dynamics} organised by S. Filip, D. Fisher and B. Klingler held in Como (Italy) from October 16 to 22, 2022. We are indebted to several participants of this workshop (including S. Cantat, S. Filip, B. Klingler, L. Lerer and M. M\"oller) for many interesting discussions. 


\section{Eigencohomology of cyclic covers}

Let \(t=(t_1,\ldots, t_n) \in \mathbb C^n \) be a collection of distinct complex numbers. The plane algebraic curve defined by the equation 
\[ y^d = (x-t_1) \ldots (x-t_n) \]
compactifies as a smooth compact Riemann surface \(C_t\) by adding \( a= \textrm{gcd}(d,n)\) points at infinity.  The function \( X_t: C_t \rightarrow  \mathbb P^1 \) defined by \(X_t(x,y)= x\)  is a ramified covering over the sphere having \(n\) critical points of order \(d\) and \( a \) critical point(s) of order \(d / a\). So by Riemann-Hurwitz,  the genus \(g\) of \(C_t\) is given by the formula:
\[ g = \frac{(n-1) (d-1) - (a-1) }{2}. \]  
In particular, if \( (d,n)= (6,4)\),  \( g = 7\), and if \( (d,n)= (6,5)\), \( g= 10\).

The covering \(X_t\) is an abelian covering whose Galois group is generated by the transformation \( \pi_ t (x,y) = (x, \zeta y) \) where \( \zeta = \exp (2i\pi /d)\). We denote by \( H^* _\zeta (C_t, \mathbb C) \) the eigenspace \( \text{Ker} (\pi_t^* - \zeta) \subset H^* (C_t, \mathbb C)\). Unless specified, a holomorphic/meromorphic eigenform on \(C_t\) is a form \(\eta\) such that \(\pi_t^* \eta = \zeta \eta\). The following result is well-known, see, e.g., \cite[\S3]{McM} for more details. We provide the proof for completeness.

\begin{lemma}\label{l: trivialization}
In the regime \( n < d\), we have 
\[ H^{1,0} _{\zeta} (C_t,\mathbb C) = \{ U(x) \frac{dx}{y^{d-1}} \, |\, U \text{ polynomial of degree } \leq n-2 \}.\]
Moreover, the eigenspace \( H^1_{\zeta }(C_t, \mathbb C)\) is made of cohomology classes of holomorphic eigenforms, i.e. 
\[ H^1_{\zeta} (C_t,\mathbb C)\simeq  H^{1,0} _{\zeta}(C_t,\mathbb C)  \text{ and }  H^{0,1} _\zeta (C_t, \mathbb C) = \overline{H^{1,0}_{\zeta^{d-1}} (C_t, \mathbb C)} =0.\] 
\end{lemma}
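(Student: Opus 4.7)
The plan is to exploit the identity \(\pi_t^*(dx/y^{d-1}) = \zeta^{-(d-1)} dx/y^{d-1} = \zeta \cdot dx/y^{d-1}\), which says that \(dx/y^{d-1}\) is itself a meromorphic \(\zeta\)-eigenform. Dividing any other meromorphic \(\zeta\)-eigenform \(\eta\) by it produces a \(\pi_t\)-invariant meromorphic function that descends to \(\mathbb P^1\) as a rational function \(U(x)\). Hence every meromorphic \(\zeta\)-eigenform has the shape \(\eta = U(x)\,dx/y^{d-1}\), and the first claim reduces to identifying for which rational \(U\) this form is holomorphic on \(C_t\).

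I would then compute local orders at the two nontrivial types of points: the branch points \((t_i,0)\) and the \(a = \gcd(d,n)\) points at infinity. At a branch point, \(y\) is a local uniformizer and \(y^d \sim c(x-t_i)\) yields \(dx \sim (d/c)\,y^{d-1}\,dy\), so \(dx/y^{d-1}\) is holomorphic and nonvanishing there; regularity on the finite part then forces \(U\) to be a polynomial. At each point at infinity, I would use a local uniformizer \(v\) with \(x \sim v^{-d/a}\) and \(y \sim v^{-n/a}\), which yields
\[ \mathrm{ord}_\infty\bigl(U(x)\,dx/y^{d-1}\bigr) \;=\; (d-1)\tfrac{n}{a} - \tfrac{d}{a}(\deg U + 1) - 1. \]
An elementary manipulation using the inequality \(\gcd(d,n) \le d-n\), valid whenever \(d > n\), shows this quantity is nonnegative iff \(\deg U \le n-2\); this produces the explicit basis \(\{x^j\,dx/y^{d-1}\}_{0 \le j \le n-2}\) of \(H^{1,0}_\zeta(C_t,\mathbb C)\) asserted in the lemma.

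For the second claim, the Hodge decomposition \(H^1 = H^{1,0}\oplus H^{0,1}\) together with complex conjugation gives \(H^{0,1}_\zeta(C_t,\mathbb C) = \overline{H^{1,0}_{\zeta^{d-1}}(C_t,\mathbb C)}\), so it suffices to prove \(H^{1,0}_{\zeta^{d-1}}(C_t,\mathbb C) = 0\). The same reduction represents every such holomorphic eigenform as \(V(x)\,dx/y\) with \(V\) a polynomial, and the analogous order computation at infinity returns \((n/a) - (d/a)(\deg V + 1) - 1\); since \(d>n\) already forces \((n/a) - (d/a) - 1 < 0\), no nonzero such form exists. The only delicate point in the whole argument is the arithmetic at infinity—correctly tracking the ramification index \(d/a\) and the leading exponent \(n/a\) of \(y\) in the uniformizer—so that the threshold \(\deg U \le n-2\) falls out exactly; beyond that, everything is formal from the eigenspace decomposition and Hodge theory.
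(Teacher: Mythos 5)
Your proof is correct and, for the main identification of $H^{1,0}_{\zeta}(C_t,\mathbb C)$, follows exactly the paper's route: reduce a $\zeta$-eigenform to $U(x)\,dx/y^{d-1}$ with $U$ rational, force $U$ to be a polynomial by the local analysis at the branch points $(t_i,0)$, and read off the bound $\deg U\leq n-2$ from the order at the $a=\gcd(d,n)$ points at infinity (your exponent $(d-1)\tfrac{n}{a}-\tfrac{d}{a}(\deg U+1)-1$ is precisely the paper's $b(d-1)-c(\deg U+1)-1$ with $b=n/a$, $c=d/a$, and your use of $\gcd(d,n)\leq d-n$ is equivalent to their $b<c$). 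The one genuine divergence is the second claim: the paper concludes by comparing dimensions, invoking $\dim H^1_{\zeta}(C_t,\mathbb C)=n-1$ so that the $(n-1)$-dimensional space of holomorphic eigenforms must exhaust the eigenspace, whereas you prove directly that $H^{1,0}_{\zeta^{d-1}}(C_t,\mathbb C)=0$ by running the same local computation on forms $V(x)\,dx/y$ and observing that the order at infinity is already negative for $\deg V=0$. Your variant is more self-contained, since it does not presuppose the dimension of the full eigenspace $H^1_\zeta$ (a fact the paper uses without proof), at the cost of one more local computation; the paper's version is shorter if one grants that standard dimension count. Both are complete arguments.
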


\begin{proof} A form in \( H^{1,0} _{\zeta} (C_t)\) can be expressed as \( \eta = U(x) \frac{dx} {y^{d-1} } \) where \( U\) is a meromorphic function on \(\mathbb P^1\), which is holomorphic except possibly at the points \( t_i \)'s and the point at infinity. In the sequel assume that \(U\) does not vanish identically. 

Around the point \( t_i\), we can write \( U(x) \sim \text{cst}\, (x-t_i) ^{k_i}\) for some integer \(k_i\) and a non zero constant. We can also take \(y\) as a holomorphic coordinates and we have \( x-t_i \sim \text{cst}\, y^d\), so 
\[ \eta \sim _{(t_i, 0)}  \text{cst}\, y^{dk_i } dy \]
We deduce for \(\eta\) to be holomorphic, \(k_i\) must be non negative for every \(i=1,\ldots, n\), hence \(U\) is a polynomial of \(x\).

It will be convenient to introduce the integers \(b,c\) such that \( n=ab\) and \(d= ac\). Around a point \(\infty\) of \(C_t\) at infinity, we have a coordinate \(z\) so that \(x\sim \text{cst}\, z^{-c}\) and \( y\sim \text{cst}\, z^{-b} \). We then have (denoting \(\text{deg} (U)\) the degree of \(U\)) 
\[ \eta \sim _{\infty}   \text{cst} \, z^{- c \text{deg} (U) - c - 1 +b(d-1) }  \]
 which shows that \(\eta\) is holomorphic iff \(\text{deg} (U) \leq \frac{b(d-1) - c - 1}{c}=  n - 1- \frac{ (b+1) }{c} \), or equivalently, since \( b<c\), iff \( \text{deg} (U) \leq n-2\). In particular \( \text{dim}  \, H^{1,0} _{\zeta} (C_t, \mathbb C) = n-1 = \text{dim}\,  H^1 _\zeta (C_t,\mathbb C)\) so we are done. \end{proof}


\section{Finite monodromies}\label{s.fixed-parts} 

The condition $(\star)$ of Klingler and Lerer (cf. \cite[Def. 6.5]{KL}) indicates that the families of curves whose Jacobians possess non-trivial fixed parts are potential sources of non-linear bi-algebraic subvarieties: see \S6.4 of \cite{KL} for more explanations. In this direction, we note that such families of curves were previously studied by several authors in the context of Teichm\"uller dynamics (e.g., \cite{FMZ}, \cite{MY}, \cite{Mo}, \cite{McM}, \cite{AMY} among many others), and, in fact, the families of curves $C_u$ and $C_{a,b,c}$ are particular instances of the families of curves described in \cite[Thm. 8.3]{McM}. For later reference, we recall below some of the remarkable properties of the families of Jacobians of $C_u$ and $C_{a,b,c}$. 





\begin{proposition}\label{p: omega2 bi algebraic}
If \((d,n) = (6,4)\) or \( (d,n)= (6,5)\), the monodromy of the Gauss-Manin connection on the bundle \(H^1_\zeta\)  is finite.  In particular, the families of forms \( \{(C_u,[\omega_{u}])\}_{u\neq 0,\pm1} \), lying in \(\mathcal H(12)\), and \( \{(C_{a,b,c},[\omega_{a,b,c}])\}_{\substack{a,b,c\neq 0,1 \\ distinct}} \), lying in \(\mathcal H(18)\), are bi-algebraic.  
\end{proposition}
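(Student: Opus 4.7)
The plan is to split the proof into two steps: first, show that the Gauss--Manin monodromy on \(H^1_\zeta\) is finite; then deduce bi-algebraicity of the two families.

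For the finiteness, note that Lemma \ref{l: trivialization} gives \(H^1_\zeta = H^{1,0}_\zeta\) since \(n<d\) in both cases \((d,n)\in\{(6,4),(6,5)\}\), so the Hodge structure on \(H^1_\zeta\) is pure of type \((1,0)\) and the polarization \(h(\alpha,\beta) := i\int_{C_t}\alpha\wedge\overline\beta\) is positive-definite on this eigenspace. The Gauss--Manin connection preserves the intersection form and commutes with the algebraic action of the covering automorphism \(\pi_t\), so the monodromy on \(H^1_\zeta\) lies in the compact unitary group \(U(H^1_\zeta,h)\); it also preserves the discrete \(\mathbb Z[\zeta]\)-submodule of \(H^1_\zeta\) coming from \(H^1(C_t,\mathbb Z)\). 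Since a discrete subgroup of a compact Lie group is finite, the monodromy on \(H^1_\zeta\) has finite image.

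For the bi-algebraicity, observe that \(\pi_t^*\omega = \zeta\omega\), so \([\omega]\in H^1_\zeta\). Because \(\omega\) has a single zero, relative and absolute periods coincide, and the period chart of \(\mathcal H(\kappa)\) is modeled on \(\mathbb{P}H^1(C_t,\mathbb C)\). By the adjointness \(\int_{\pi_{t,*}\gamma}\omega = \zeta\int_\gamma\omega\), the period of \(\omega\) along any cycle in the eigenspace \(H_1^{\zeta^k}\) vanishes for \(k\neq1\); this yields \(2g - \dim H^1_\zeta\) linear vanishing relations in period coordinates. The remaining \(\dim H^1_\zeta\) eigen-periods, after pulling back to the finite étale cover of the parameter space that trivializes the monodromy on \(H^1_\zeta\), become single-valued holomorphic solutions of the algebraic Gauss--Manin equations; by the standard principle that flat sections with finite monodromy of an algebraic flat connection are algebraic, these eigen-periods are algebraic functions of the parameters. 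Together with the vanishings, they provide enough algebraic relations to match the codimension of the family in \(\mathcal H(\kappa)\), establishing bi-algebraicity.

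The main obstacle is the bookkeeping in the final step: one must verify that the image of each family in \(\mathbb{P}(H^1_\zeta)\) has the expected dimension (one for the \(\mathcal H(12)\)-family, and filling all of \(\mathbb P(H^1_\zeta) = \mathbb P^3\) for the \(\mathcal H(18)\)-family) so that the collected algebraic relations cut out a subvariety of the correct codimension. The remaining ingredients --- the ``compact plus integral implies finite'' deduction, and the algebraicity of flat sections with finite monodromy of a regular algebraic connection --- are classical consequences of Hodge theory and the theory of algebraic differential equations.
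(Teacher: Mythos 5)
Your argument is correct and follows essentially the same route as the paper: Lemma \ref{l: trivialization} makes the polarization definite on \(H^1_\zeta=H^{1,0}_\zeta\), so the monodromy is unitary; preservation of an integral/discrete structure then forces finiteness; and finite monodromy makes the (eigen-)periods algebraic functions of the parameters, whence bi-algebraicity. The only point where you are less explicit than the paper is in justifying why the \(\mathbb Z[\zeta]\)-submodule of \(H^1_\zeta\) is actually discrete --- the paper pins this on the fact that \(\zeta\) and \(\zeta^5\) are the \emph{only} primitive sixth roots of unity, so that \(H^1_\zeta\oplus H^1_{\zeta^5}\) is defined over \(\mathbb Q\) and carries a definite form on each summand.
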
 

Here \([\omega]\) is the projective class of \(\omega\) in the projectivization of the Hodge bundle.

\begin{proof} This statement is included in Theorem 8.3 of \cite{McM}. Let us provide the proof for completeness. Lemma \ref{l: trivialization} ensures that 
the Gauss-Manin invariant hermitian form  \(\frac{i}{2} \int \omega\wedge \overline{\omega} \) on \( H^1 _\zeta \) (resp. \(H^1 _{\zeta^5} =\overline{H^1 _\zeta}\)) is positive definite, so its monodromy is unitary. Moreover,  
the bundle \(H^1(C_t,\mathbb{C})_{\zeta}\oplus H^1(C_t,\mathbb{C})_{\zeta^5}\) is defined over $\mathbb{Q}$ (because $\zeta$, $\zeta^5$ are the sole primitive sixth roots of unity). So the monodromy is integral (it is the restriction of the symplectic integral representation to a rational subspace) and preserves a unitary form; hence it is finite. In particular, as it is explained in \cite[pp. 929]{McM} for instance, since the periods of the corresponding families of forms are holomorphic (multivalued) functions of $t$ which are algebraic when the monodromy is finite, one has that the corresponding families of forms are bi-algebraic.
\end{proof}





\section{Picard-Fuchs theory: computation of the Gauss-Manin connection}

The vector bundle \( H^1 _\zeta\) over the configuration space \(B\) of $n$ distinct complex numbers is isomorphic, thanks to Lemma  \ref{l: trivialization}, to the product \(B\times \mathbb C_{n-2} [x]\), where $\mathbb C_{n-2} [x]$ stands for the space of polynomials of degree $\leq n-2$.  More precisely,  for \( U\in \mathbb C_{n-2} [x]\), we denote  
\begin{equation} \label{eq: holomorphic eigneform} \omega_{t,U}= U(x) \frac{dx}{y^{d-1}} \end{equation} 
 the corresponding holomorphic eigenform on \(C_t\). The map \( (t , U)  \mapsto \omega_{t,U} \) gives the desired trivialization. In the following, we compute the Gauss-Manin connection on \(H^1_\zeta\) in this latter: it is defined as follows 
 \[ \nabla_{\cdot}  ( \omega_{t, U}) = \omega_{t, \nabla_{\cdot} U} .\] 
 We denote 
\begin{equation} \label{eq: polynomial} P(x) = (x-t_1) \ldots (x-t_n).\end{equation}

\begin{proposition} \label{p: Gauss-Manin} 
In the trivialization \( H^1 _\zeta \simeq B \times \mathbb C_{n-2} [x]\) given by Lemma \ref{l: trivialization}, the Gauss-Manin connection is given by  
\[ \nabla _{\partial_{t_k}} U = - \frac{U(t_k)}{P'(t_k)} \frac{P(x)- P(t_k) - P'(t_k) (x-t_k) }{(x-t_k)^2 } +\frac{1}{d} \frac{ U(t_k)}{P'(t_k)} \frac{P'(x) - P'(t_k) } {x-t_k} +\]
\[+\frac{d-1}{d}  \frac{U(x) - U(t_k) }{x-t_k} \]
for any \(U\in \mathbb C_{n-2} [x]\). 
\end{proposition}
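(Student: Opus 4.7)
My plan is to compute the Gauss-Manin connection by the standard recipe: for a local section $t \mapsto [\omega_t]$ of $H^1_\zeta$, $\nabla_v [\omega_t]$ is represented by the naive derivative $\partial_v \omega_t$ (differentiating the ambient $1$-form in algebraic coordinates) modulo exact forms on $C_t$. Starting from the trivializing section $t \mapsto \omega_{t,U}$ with $U$ a fixed polynomial of degree $\leq n-2$, I need to (i) differentiate with respect to $t_k$, (ii) correct by an exact eigenform to kill the resulting poles, and (iii) identify the cohomology class with $\omega_{t,V}$ for an appropriate polynomial $V \in \mathbb C_{n-2}[x]$.

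For step (i), implicit differentiation of $y^d = P(x)$ at fixed $x$ yields $\partial_{t_k} y = -y/[d(x-t_k)]$, since $\partial_{t_k} P(x) = -P(x)/(x-t_k)$. A one-line computation then gives
\[
\partial_{t_k} \omega_{t,U} \;=\; \frac{d-1}{d}\cdot \frac{U(x)}{x-t_k}\cdot \frac{dx}{y^{d-1}},
\]
whose only pole on $C_t$ is at the ramification point $(t_k,0)$.

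For step (ii), the natural $\zeta$-eigen exact $1$-form to subtract is $d\bigl(y/(x-t_k)\bigr)$. Using $dy = P'(x)\,dx/(d\,y^{d-1})$ and $y\cdot y^{d-1} = P(x)$, a direct calculation yields
\[
d\!\left(\frac{y}{x-t_k}\right) \;=\; \left[\frac{P'(x)}{d(x-t_k)} \;-\; \frac{P(x)}{(x-t_k)^2}\right]\frac{dx}{y^{d-1}}.
\]
Selecting the coefficient $\lambda = -U(t_k)/P'(t_k)$ and subtracting $\lambda\cdot d(y/(x-t_k))$ from $\partial_{t_k}\omega_{t,U}$ cancels the polar contribution at $x=t_k$, as verified using $P(t_k)=0$ together with the identity $P(x)/(x-t_k)\bigl|_{x=t_k} = P'(t_k)$.

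For step (iii), what remains is an algebraic rearrangement. Splitting each of $P(x)/(x-t_k)^2$, $P'(x)/(x-t_k)$, and $U(x)/(x-t_k)$ into a polynomial piece plus a simple pole at $x=t_k$, and exploiting the cancellation of the total polar part, one identifies the remaining polynomial $V(x)$ with the precise three-term expression claimed in the proposition. Each of those three summands is manifestly of degree $\leq n-2$, so $V\in \mathbb C_{n-2}[x]$, and $[\partial_{t_k}\omega_{t,U}] = [\omega_{t,V}]$ in $H^1_\zeta(C_t)$, yielding $\nabla_{\partial_{t_k}} U = V$ as claimed. The only real obstacle I anticipate is the bookkeeping in this last algebraic step; the structural ideas---implicit differentiation of $y^d = P(x)$ and cancelling the pole through the single exact form $d(y/(x-t_k))$---are straightforward, and the fact that exactly one such exact form suffices reflects the simplicity of the ramification geometry of the cyclic cover at $(t_k,0)$.
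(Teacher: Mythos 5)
Your proposal is correct and follows essentially the same route as the paper: implicit differentiation of $y^d=P(x)$ to get $\partial_{t_k}\omega_{t,U}=\frac{d-1}{d}\frac{U(x)}{x-t_k}\frac{dx}{y^{d-1}}$, correction by the exact eigenform $\frac{U(t_k)}{P'(t_k)}\,d\bigl(y/(x-t_k)\bigr)$ to cancel the order-$d$ pole at $(t_k,0)$, and an algebraic rearrangement using $P(t_k)=0$ to identify $V\in\mathbb C_{n-2}[x]$. The only cosmetic difference is that you compute $df$ directly from $dy=P'(x)\,dx/(d\,y^{d-1})$ where the paper uses logarithmic differentiation of $y$, and the paper is slightly more explicit about justifying the "naive derivative modulo exact forms" description of Gauss--Manin via a family of identifications away from the branch points.
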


\begin{proof} Let \(N_i\) be simply connected neighborhoods of the \(t_i\)'s, chosen sufficiently small to be disjoint. Denoting by \(N= \cup N_i\), and \(Y = \mathbb C \setminus N\), the Riemann surfaces \( X_{t'} ^{-1} (Y)  \) for  \(t'= (t_1',\ldots, t_n ') \in \prod _ i N_i \), are naturally identified to \( X_{t} ^{-1} (Y)  \) via a holomorphic family of biholomorphisms \( \Phi _{t' ,t} : X_{t} ^{-1} (Y)\rightarrow X_{t'} ^{-1} (Y)\) that satisfy \( X_{t'} \circ \Phi _{t' ,t} = X_t\) and \(\Phi_{t,t}= \text{id} \). Using this family it is possible to derivate a holomorphic family of holomorphic \(1\)-forms \(\omega_t\) on \(X_t^{-1}(Y)\) with respect to the \(t_k\)-variables, by the formula 
\[ \partial _{t_k} \omega_t := \left( \partial_ {t'_k} \Phi_{t',t} ^* \omega_{t'} \right)_{|t'=t}  ; \]
by shrinking $N$, this defines a meromorphic \(1\)-form on \(C_t\) which is holomorphic apart from  the points \(  (t_i,0)\in C_t\) but that might develop a pole at  each \( (t_i,0)\in C_t\). These poles have no residue at the points \((t_i,0)\), so the period of \(\partial_{t_k}  \omega_t\) is well-defined as an element  of \( H^1 (C_t, \C)\): for more discussion about this construction (leading to the Kodaira-Spencer map and the Gauss-Manin connection), see Clemens' book \cite[\S 2.10]{Cl} and Voisin's book \cite[Ch. 9]{V}. 

Applied to the family of eigenforms \( \{ (C_t,\omega_{t,U})\}_{t\in \mathbb C^n \setminus \Delta} \), where \( U \in \mathbb C_{n-2} [x]\),  the derivative \( \partial _{t_k} \omega_t \) is a meromorphic eigenform, which is cohomologous to a holomorphic eigenform \(\omega_{t, V_t} \) with \(V_t \in \mathbb C_{n-2} [x]\) by Lemma \ref{l: trivialization}. By definition 
\begin{equation} \label{eq: nabla in trivialization} \nabla _{t_k} U := V_t .\end{equation}
This equation is characterized by the existence of a (unique) meromorphic eigenfunction \( f : C_t\rightarrow \mathbb P^1\) which is such that 
\[ \partial _{t_k} \omega_{t,U} - \omega _{t, V_t} = df . \] 
Here eigenfunction means \( f\circ \pi_t = \zeta f\). Let us now make the computations and find the function \(f\).

Derivating the multi-valued function \( y = \prod_j (x-t_j) ^{1/d} \) with respect to the variables \(t_k\) yields \(\partial_{t_k} y= \frac{-y}{d(x-t_k)} \)
from which we get 
\begin{equation}\label{eq: derivative}  \partial _{t_k} \left( U(x)  \frac{dx}{y^{d-1} }\right) =\frac{d-1}{d} \frac{U(x) }{x-t_k} \frac{dx}{y^{d-1} } .\end{equation}
This meromorphic eigenform has a pole at the point \( (t_k, 0)\), which in the coordinates \(y\) of \(C_t\) at \((t_k,0)\) has the following local behaviour (assuming that \(U(t_k)\neq 0\))  
\begin{equation}\label{eq: local behaviour form}  \partial _{t_k} \left( U(x)  \frac{dx}{y^{d-1} }\right) \sim _{(t_k,0)} (d-1) \frac{U(t_k)} {y^d} dy . \end{equation}

The meromorphic function \( f : C_t \rightarrow \mathbb P^1\)  defined by 
\begin{equation} \label{eq: meeromophic function} f(x,y) := \frac{y}{x-t_k}\end{equation}
has a unique pole at the point \((t_k, 0)\) and it has the following local behaviour 
\begin{equation} \label{eq: local behaviour function} f \sim _{(t_k,0)} \frac{P'(t_k) }{y^{d-1}} \end{equation}
(it has no pole at infinity since \(d\geq n\)). In particular, the eigenform 
\[ \partial _{t_k} \left( U(x)  \frac{dx}{y^{d-1} }\right) + \frac{U(t_k)}{P'(t_k)}df \] 
is holomorphic on \(C_t\), we thus have 
\begin{equation}\label{eq: correction}  \partial _{t_k} \left( U(x)  \frac{dx}{y^{d-1} }\right) + \frac{U(t_k)}{P'(t_k)}df = \omega_{t,V_t} ,\end{equation}  
with \( \nabla_{t_k} U= V_t\) by \eqref{eq: nabla in trivialization}, so it remains to compute \eqref{eq: correction}. We have 
\begin{small} 
\[  df = f \frac{df}{f} =  \frac{y}{x-t_k} \left( \frac{dy}{y} - \frac{dx}{x-t_k} \right) =\frac{y}{x-t_k}\left( \frac{1}{d}\sum _{j\neq k}\frac{dx}{x-t_j} +\left(\frac{1}{d}- 1\right) \frac{dx}{x-t_k} \right)=\]
\[= \frac{P(x)}{x-t_k} \left( \frac{1}{d}\sum _{j\neq k}\frac{1}{x-t_j} - \frac{d-1}{d} \frac{1}{x-t_k} \right) \frac{dx}{y^{d-1}}.\]
\end{small} 
So using the identity 
\[ \frac{P(x)}{x-t_k}\sum_{j\neq k} \frac{1}{x-t_j} = \left(\frac{P(x)}{x-t_k}\right)' = \frac{P'(x)}{x-t_k}-\frac{P(x)}{(x-t_k)^2}\]
we deduce 
\[ V_t (x)= \frac{d-1}{d}\left( \frac{U(x)}{x-t_k} - \frac{U(t_k)}{P'(t_k) } \frac{P(x)}{(x-t_k)^2}\right) +\frac{1}{d} \frac{U(t_k) }{P'(t_k) } \left(\frac{P'(x)}{x-t_k}-\frac{P(x)}{(x-t_k)^2}\right). \]
Rearranging terms and using \(P(t_k)=0\) we find the expression of the Gauss-Manin connection given by Proposition \ref{p: Gauss-Manin}.
 \end{proof}

\section{Proof of Theorem \ref{t: non linear bi algebraic curve}}\label{s.Thm1}

In view of Proposition \ref{p: omega2 bi algebraic}, our task is reduced to check that the lines spanned by $\omega_{u}$, $u\neq 0, 1$, sit ``generically'' on $H^1 _\zeta (C_u, \mathbb C)$. A direct application of Proposition \ref{p: Gauss-Manin} shows that  the Gauss-Manin on \(H^1 _\zeta\) in the trivialization given by the sections \( p_k= x^k \frac{dx}{y^5}\) for \(k=0,1,2\) is  
\begin{equation}\label{eq: Gauss Manin on V} \nabla = d + \frac{dt}{6t(1-u^2)} 
\begin{pmatrix} 
5u^2 -4   & u        & u^2  \\  
5u          & 5u^2  & 5u   \\
 2           & 2u      & 2u^2  
\end{pmatrix}
\end{equation}
By Proposition \ref{p: omega2 bi algebraic}, it suffices to prove that the family of periods \( \{p_{2,u}\}_{u\neq 0, \pm 1}\) does not lie in a flat strict subbundle of \( H^1 _\zeta\), which will be achieved by proving that at a generic parameter \(u\) the periods \(p_{2,u}, \nabla_ u p_{2,u}, \nabla_u ^2 p_{2,u}\) are linearly independent in \(H^1_\zeta(C_u,\mathbb C)\). We use the basis \( p_{k,u}\) of \(H^1_\zeta(C_u, \mathbb C)\) to perform the computations, together with the explicit expression of the Gauss-Manin connection \eqref{eq: Gauss Manin on V}. We have in this basis 
\[p_{2,u} = \begin{pmatrix} 
0\\
0\\
1
\end{pmatrix}, \ \  
\nabla_u p_{2,u} =\frac{1}{6u(1-u^2)} 
\begin{pmatrix} 
u^2  \\  
5u   \\
2u^2  
\end{pmatrix} 
\]
and 
\[
\nabla _u\left( 6(1-u^2) \nabla_u p_{2,u} \right)= \frac{1}{6(1-u^2)}
\begin{pmatrix} 
7+u^2  \\  
40u  \\
24-8u^2  
\end{pmatrix}
.\]
Those three vectors are generically linearly independent, and by Leibniz rule, so are the vectors \(p_{2,u}, \nabla_ u p_{2,u}, \nabla_u^2 p_{2,u}\). 

\begin{remark} The family $(\{y^{10}=x(x-1)(x-v)\}, [dx/y^9])_{v\neq 0,1}$ is a bi-algebraic curve in $\mathcal{H}(16)$ because $dx/y^9$ lies in a fixed part of dimension four (cf. \cite[Thm. 8.3]{McM}). Nonetheless, a direct calculation using Proposition \ref{p: Gauss-Manin} reveals that this bi-algebraic curve is linear. 
\end{remark}



\section{Proof of Theorem \ref{t: non linear bi algebraic surface}}\label{s.Thm2}  

In view of Proposition \ref{p: omega2 bi algebraic}, our task is reduced to check that the lines spanned by $\omega_{a,b,c}$, $a,b,c\neq 0, 1$, sit ``generically'' on the fibers of the vector bundle \( H^1 _\zeta\). As it turns out, this fact is a particular case of Theorem 6.1 in \cite{McM}, where a result of Veech and Masur (about the period charts) is cleverly explored to derive that the (period) map taking the configuration of points $\{0,1,a,b,c\}$ to $[dx/y^5]$ is a holomorphic local diffeomorphism.\footnote{Furthermore, McMullen also gets in \cite[Thm. 9.1]{McM} that the periods of $dx/y^5$ are algebraic functions of $(a,b,c)$.} 

In the sequel, we give an alternative proof of the non-linearity of the bi-algebraic surface \(\{ (C_{a,b,c}, \omega_{a,b,c} ) \} _{\substack{a,b,c\in \C \setminus\{0, 1\} \\ distinct}}\) via Proposition \ref{p: Gauss-Manin}: in particular, we do not rely on the results of Veech and Masur, but rather perform direct calculations (which might be of independent interest). 

Proposition \ref{p: Gauss-Manin} shows that  
\[ \nabla_{\partial_u } \frac{dx}{y^5} = \frac{1}{P'(u)} \left( \frac{1}{6} \frac{P'(x)- P'(u) } {x-u}  - \frac{P(x) - P'(u) (x-u) }{(x-u)^2} \right) \frac{dx}{y^5}\] 
(where $u\in\{a,b,c\}$ and $P(x)=x(x-1)(x-a)(x-b)(x-c)$).

At this point, it remains to check that 
\[ \frac{dx}{y^5}, \ \ \nabla_{\partial_a } \frac{dx}{y^5},\ \ \nabla_{\partial_b } \frac{dx}{y^5} , \ \ \nabla_{\partial_c } \frac{dx}{y^5} \] 
are linearly independent. For this sake, let us write them in terms of the basis $\{x^kdx/y^5:k=0, 1, 2, 3\}$. If we denote $\{u,v,w\}=\{a,b,c\}$, then  
\begin{eqnarray*} 
\frac{P'(x)- P'(u) } {x-u} &=& (u^3 -u^2 + u v - u^2 v + u w - u^2 w - 2 v w + 
 u v w) + (u^2 -u + 3 v - u v + 3 w - u w + 3 v w) x \\ &+& (u - 4  - 
    4 v - 4 w) x^2 + 5 x^3,  
\end{eqnarray*} 
\begin{eqnarray*} 
\frac{P(x) - P'(u) (x-u) }{(x-u)^2}  &=& (u^3 -u^2 + u v - u^2 v + u w - u^2 w - v w + 
 u v w) \\ &+& (u^2 -u + v - u v + w - u w + v w) x + (u -1 - v - 
    w) x^2 + x^3, 
\end{eqnarray*} 
so that 
\begin{eqnarray*} 
\frac{1}{6} \frac{P'(x)- P'(u) } {x-u} - \frac{P(x) - P'(u) (x-u) }{(x-u)^2} &=& \frac{1}{6} (5 u^2 - 5 u^3 - 5 u v + 5 u^2 v - 5 u w + 5 u^2 w + 4 v w - 5 u v w) \\ &+& 
 \frac{1}{6} (5 u - 5 u^2 - 3 v + 5 u v - 3 w + 5 u w - 3 v w) x \\ &+&  
 \frac{1}{6} (2 - 5 u + 2 v + 2 w) x^2 - \frac{x^3}{6}.  
\end{eqnarray*} 
In particular, the matrix $M$ whose column vectors are \( \frac{dx}{y^5}, \ \ P'(a) \nabla_{\partial_a } \frac{dx}{y^5},\ \ P'(b) \nabla_{\partial_b } \frac{dx}{y^5} , \ \ P'(c) \nabla_{\partial_c } \frac{dx}{y^5} \) written in the basis $dx/y^5, xdx/y^5, x^2dx/y^5, x^3dx/y^5$ is  
$$M=\tiny{\left(\begin{array}{cccc}1 & \ast &  
 \ast & 
 \ast \\ 0 & 
 \frac{1}{6} (5 a - 5 a^2 - 3 b + 5 a b - 3 c + 5 a c - 3 b c) &  
 \frac{1}{6} (5 b - 5 b^2 - 3 a + 5 a b - 3 c - 3 a c + 5 b c) & 
 \frac{1}{6} (5 c - 5 c^2 - 3 a + 5 a c - 3 b - 3 a b + 5 b c) \\ 0 &  
 \frac{1}{6} (2 - 5 a + 2 b + 2 c) & \frac{1}{6} (2 + 2 a - 5 b + 2 c) & 
 \frac{1}{6} (2 + 2 a + 2 b - 5 c) \\ 0 & -\frac{1}{6} & -\frac{1}{6} & -\frac{1}{6} \end{array}\right)}$$ 
 Since the determinant of $M$ is 
 $$\det(M) = -\frac{91}{216} (a - b) (a - c) (b - c),$$ 
 the proof of Theorem \ref{t: non linear bi algebraic surface} is now complete.  
 
 \begin{remark} The determinant of \( \frac{dx}{y^5}, \ \  \nabla_{\partial_a } \frac{dx}{y^5},\ \ \nabla_{\partial_b } \frac{dx}{y^5} , \ \ \nabla_{\partial_c } \frac{dx}{y^5} \) in the basis $dx/y^5, xdx/y^5, x^2dx/y^5, x^3dx/y^5$, which is somehow more natural, is given by the expression 
 \[ - \frac{91}{216} \frac{(a-b)(a-c)(b-c)}{P'(a)P'(b)P'(c)}= \frac{91}{216}\frac{1}{abc(a-1)(b-1)(c-1)(a-b)(a-c)(b-c)} \]
\end{remark}


\smallskip

\end{document}